\theoremstyle{plain}
\newtheorem{theorem}{Theorem}[section]
\newtheorem{lemma}[theorem]{Lemma}
\theoremstyle{definition}
\newtheorem{remark}[theorem]{Remark}
\theoremstyle{remark}
 \numberwithin{equation}{section} 
\begin{document}
\title{Blow-up Rate Estimates for a Semilinear Heat Equation with a Gradient Term}
\author{Maan A. Rasheed and Miroslav Chlebik}
%\date{}
\maketitle

\abstract 
We consider the the pointwise estimates and the blow-up rate estimates for the zero Dirchilet problem of the semilinear heat equation with a gradient term $u_t= \Delta u-|\nabla u|^2+ e^u,$  which has been considered  by J. Bebernes and D. Eberly in  \cite{48}.

\section{Introduction} 
Consider the following initial-boundary value problem
\begin{equation}\label{1e} \left.\begin{array}{ll}
u_t= \Delta u-h(|\nabla u|) + f(u),&\quad (x,t) \in B_R  \times (0,T),\\
u(x,t)=0,& \quad (x,t)\in \partial  B_R  \times (0,T),\\
u(x,0)=u_0(x),&\quad x \in B_R,\end{array}\right\} \end{equation}
 where $f \in C^1(R),$ $h\in C^{1}([0,\infty)),$ $f,h >0,~h^{'}\ge 0$ in  $(0,\infty),~f(0)\ge 0, h(0)=h^{'}(0)=0,$ \begin{equation}\label{viva}|h(\xi)|\le O(|\xi|^2),\end{equation}  \begin{equation} \label{m2} sh^{'}(s)-h(s)\le Ks^q,\quad \mbox{for} \quad s>0,~ 0\le K < \infty,~ q>1,\end{equation}     
 $u_0\ge 0$ is smooth, radial nonincreasing function, vanishing on $\partial B_R,$ this means it satisfies the following conditions
\begin{equation}\label{3e}\left. \begin{array}{ll}
 u(x)=u_0(|x|),& \quad x \in B_R,\\  u_0(x)=0,&\quad x \in \partial B_R,\\
u_{0r}(|x|)\le 0,& \quad  x \in {B}_R.\end{array} \right\}\end{equation} Moreover, we assume that 
\begin{equation}\label{ee3} \Delta u_0+ f(u_0) - h(|\nabla u_0|) \ge 0, \quad  x \in B_R.\end{equation} 
The special case 
\begin{equation}\label{z1} u_t= \Delta u- |\nabla u|^q + u|u|^{p-1}, ~p,q>1\end{equation} was introduced in \cite{24}  and it was studied and discussed later by many authors see for instance \cite{25,26}. The main issue in those works was to determine for which $p$ and $q$ blow-up in finite time (in the $L^\infty$-norm) may occur. It is well known  that it occurs if and only if $p>q$ (see \cite{25}). Equation (\ref{z1}) in $R^n$ was considered  from similar point of view, in this case blow-up in finite time is also known to occur when $p>q,$ but unbounded global solutions always exist (see \cite{26}). For bounded domains, it has been shown in  \cite{29} for equation (\ref{z1}) with general convex domain $\Omega$ that, the blow-up set is compact. Moreover if $\Omega=B_R,$ then $x=0$ is the only possible blow-up point and the upper pointwise rate estimate takes the following form
$$u \le c{|x|}^{-\alpha },\quad (x,t) \in B_R\setminus\{0\} \times [0,T),$$
 for any $\alpha >2/(p-1)$ if $q\in (1,2p/(p+1) ),$ and for  $\alpha >q/(p-q)$ if  $q \in [2p/(p+1),p).$ 
We observe that $q/(p-q)>2/(p-1)$ for $q>2p/(p+1),$ therefore, the blow-up profile of solutions of  equation (\ref{z1}) is similar to that of $u_t=\Delta u +u^p$ as long as 
$q<2p/(p+1)$ (see \cite{1}), whereas for $q$ grater that this critical value, the gradient term induces an imprtant effect on the profile, which becomes more singular. 

On the other hand, it was  proved  in  \cite{27,29,30,28} that the upper (lower) blow-up rate estimate in terms of the blow-up time $T$ in the case $q<2p/(p+1)$ and $u\ge 0,$ takes the following form
$$c(T-t)^{-1/(p-1)}\le u(x,t)\le C(T-t)^{-1/(p-1)}. $$

J. Bebernes and D. Eberly  have considered  in \cite{48} a second special case of (\ref{1e}), where $f(s)=e^s, h(\xi)=\xi^2,$ namely
\begin{equation}\label{e1}\left. \begin{array}{ll}
u_t= \Delta u-|\nabla u|^2+ e^u,&\quad (x,t)\in B_R \times (0,T),\\
u(x,t)=0,&\quad (x,t) \in \partial B_R \times (0,T),\\
u(x,0)=u_0(x),&\quad x \in B_R. \end{array}\right\} \end{equation}
 The semilinear equation in (\ref{e1}) can be viewed as the limiting case of the critical splitting as $p\rightarrow \infty $ in the  equation (\ref{z1}). It has been proved that, the solution of the above problem with $u_0$ satisfies (\ref{3e}) may blow up in finite time and the only possible blow-up point is $x=0.$ Moreover, if we consider the problem in any general bounded domain $\Omega$ such that $\partial\Omega$ is analytic, then the bow up set is a compact set. On the other hand, they proved that, if $x_0$ is a blow-up point for problem (\ref{e1}) with the finite blow-up time $T$; then
 $$\lim_{t\rightarrow T^-}[u(x_0,t)+m \log (T-t) ]= k,$$ for some $m \in Z^+$ and for some $k \in R.$ The analysis therein is based on the observation that the transformation $v=1-e^{-u}$ changes the first equation in problem (\ref{e1}) into the linear equation $v_t=\Delta v+1,$ moreover, $x_0$ is a blow-up point for (\ref{e1}) with blow-up time $T$ if and only if $v(x_0,T)=1.$
    
    In this paper we consider problem (\ref{e1}) with (\ref{3e}), our aim is to derive the upper pointwise estimate for the classical solutions of this problem and to find a formula for the upper (lower) blow-up rate estimate.
 
  \section{Preliminaries} 
 The local existence and uniqueness of classical solutions to problem (\ref{1e}), (\ref{3e}) is well known by \cite{23,37}. Moreover, the gradient function $\nabla u$ is bounded as long as the solution $u$ is bounded due to (\ref{viva}) (see \cite{2}).

 The following lemma shows some properties of the classical solutions of problem (\ref{1e}) with (\ref{3e}). We may denote for simplicity $u(r,t)=u(x,t).$
 
\begin{lemma}\label{No}
 Let $u$ be a classical solution to the problem classical solution of problem (\ref{1e}) with (\ref{3e}). Then
  
  \begin{enumerate}[\rm(i)]   \item $u>0$ and it is radial nonincreasing in $B_R\times (0,T).$ Moreover if $u_0 \not\equiv 0,$ then $u_r <0 $ in $(0,R] \times (0,T).$ 
\item $u_t\ge 0$ in $\overline B_R\times [0,T).$
 \end{enumerate}
 \end{lemma}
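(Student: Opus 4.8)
My plan is to prove both parts by maximum-principle arguments, exploiting the radial structure and the differential inequality \eqref{ee3} on the initial data.

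For part (i), I would first observe that $u=0$ is a subsolution of the first equation in \eqref{1e}: since $f(0)\ge 0$ and $h(0)=0$, we have $0_t - \Delta 0 + h(|\nabla 0|) - f(0) = -f(0)\le 0$, so by the comparison principle $u\ge 0$ in $B_R\times(0,T)$; strict positivity $u>0$ in the interior then follows from the strong maximum principle applied to the linear parabolic operator obtained by writing $h(|\nabla u|)$ and $f(u)$ via mean-value coefficients (equivalently, $u$ cannot attain an interior zero minimum unless $u\equiv 0$). Radial symmetry is inherited from the radial data $u_0$ and the radial symmetry of the equation and domain, by uniqueness of the classical solution. For the monotonicity in $r$, I would differentiate the radial form of the PDE. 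Writing the equation in radial coordinates, $u_t = u_{rr} + \frac{n-1}{r}u_r - (u_r)^2 h\text{-type term} + f(u)$ — more precisely with $h(|u_r|)$ — and setting $w=u_r$, one gets a linear parabolic equation for $w$ (with coefficients involving $h'$, $f'$, and the $\frac{n-1}{r}$ singular term) on $(0,R)\times(0,T)$, with $w=0$ at $r=0$ by smoothness/symmetry and $w=u_r\le 0$ at $r=R$ (from \eqref{3e}, or rather: at the boundary $u=0$ and $u\ge0$ inside forces $u_r\le 0$ there by Hopf). Since $w(r,0)=u_{0r}\le 0$, the maximum principle gives $w\le 0$ throughout; the strong maximum principle (noting $w\not\equiv 0$ when $u_0\not\equiv0$) upgrades this to $u_r<0$ on $(0,R]\times(0,T)$. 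The care needed with the $\frac{n-1}{r}$ coefficient near $r=0$ is a standard but slightly delicate point — one typically works on $(\varepsilon,R)$ and lets $\varepsilon\to 0$, or uses that $w/r$ is bounded.

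For part (ii), I would set $z=u_t$ and differentiate the first equation in \eqref{1e} with respect to $t$. This yields a linear parabolic equation
\[
z_t = \Delta z - h'(|\nabla u|)\,\frac{\nabla u\cdot\nabla z}{|\nabla u|} + f'(u)\,z
\]
(interpreting the gradient term appropriately where $\nabla u=0$, using $h'(0)=0$), with homogeneous Dirichlet boundary condition $z=0$ on $\partial B_R\times(0,T)$ since $u\equiv 0$ there for all $t$. The initial condition is $z(x,0)=u_t(x,0)=\Delta u_0 + f(u_0) - h(|\nabla u_0|)\ge 0$ by the crucial assumption \eqref{ee3}. The maximum principle then gives $z\ge 0$, i.e. $u_t\ge 0$, on $\overline B_R\times[0,T)$.

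The main obstacle I anticipate is handling the singular gradient-dependent coefficient $h'(|\nabla u|)\nabla u/|\nabla u|$ in the linearized equation for $z=u_t$ (and the analogous issue for $w=u_r$) at points where $\nabla u$ vanishes. Since $h\in C^1$ with $h'(0)=0$ and $h'\ge 0$, and $\nabla u$ is bounded wherever $u$ is bounded (as noted in the Preliminaries, via \cite{2}), the coefficient is bounded and the parabolic comparison principle still applies — but making this rigorous requires either a smooth approximation of $h$ or a careful local argument distinguishing the two cases $\nabla u(x,t)=0$ and $\nabla u(x,t)\ne0$. This is routine once the structure is recognized; everything else is a direct application of the (strong) maximum principle.
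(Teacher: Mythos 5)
The paper states Lemma~\ref{No} without giving any proof (it is treated as standard, in the spirit of \cite{1,29,2}), so there is no argument of the authors' to compare yours against line by line. Your plan is the standard maximum-principle proof and is essentially correct: comparison with the zero subsolution plus the strong minimum principle for positivity, uniqueness for radial symmetry, the equation for $w=u_r$ with data $w(0,t)=0$, $w(R,t)\le 0$, $w(\cdot,0)=u_{0r}\le 0$ for monotonicity, and the equation for $z=u_t$ with initial sign given by \eqref{ee3} for part (ii); you also correctly identify the two technical points that need care (the $\tfrac{n-1}{r}$ coefficient near $r=0$, harmless because $w_r=0$ at an interior positive maximum and $w=0$ on the axis, and the differentiation of $h(|\cdot|)$ at a critical point, harmless because $h'(0)=0$). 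The only caveat worth recording is that the strict inequality $u>0$ in $B_R\times(0,T)$ as stated requires $u_0\not\equiv 0$ or $f(0)>0$ (otherwise $u\equiv 0$ is the solution); this is an imprecision of the lemma as stated in the generality of \eqref{1e}, not of your argument, and it is vacuous for the Bebernes--Eberly problem \eqref{e1} where $f(0)=1$.
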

 
Depending on Lemma \ref{No}, the problem (\ref{1e}) with (\ref{3e}) can be rewritten as follows  
  \begin{equation}\label{8e}\left. \begin{array}{ll}
u_t=  u_{rr}+\frac{n-1}{r}u_r- h(-u_r) + f(u),&\quad (r,t) \in (0,R)  \times (0,T),\\
u_r(0,t)=0,\quad u(R,t)=0,&\quad  t \in  [0,T),\\
u(r,0)=u_0(r),& \quad r \in [0,R],\\
u_r(r,t)<0,\quad & \quad(r,t)\in (0,R] \times (0,T).\end{array}\right\} \end{equation}

\section{Pointwise Estimate}
  
Inorder to derive a formula to the pointwise estimate for problem (\ref{8e}), we need first to recall the following theorem, which has been proved in \cite{29}.  
\begin{theorem}\label{maA}
Assume that, there exist two functions $F \in C^2([0,\infty))$ and $c_\varepsilon \in C^2([0,R]),\varepsilon>0,$ such that \begin{equation}\label{01} c_\varepsilon(0)=0, c_\varepsilon ^{'}\ge 0, \quad F>0, F^{'},F^{''} \ge 0,\quad \mbox{in} \quad (0,\infty), \end{equation}
\begin{equation}\label{02} f^{'}F-fF^{'}-2c_\varepsilon ^{'}F^{'}F +c_\varepsilon ^2F^{''}F^2-2^{q-1}K c_\varepsilon ^qF^qF^{'}+AF \ge 0, \quad u > 0, 0<r < R, \end{equation}
where $$A=\frac{c_\varepsilon^{''}}{c_\varepsilon}+\frac{n-1}{r}\frac{c_\varepsilon ^{'}}{c_\varepsilon}-\frac{n-1}{r^2},$$ $\frac{ c_\varepsilon (r)}{r} \rightarrow 0$ uniformly on $[0,R]$ as 
$\varepsilon \rightarrow 0,$ and 
$$ G(s)=\int _s^\infty \frac{du}{F(u)} <\infty,  \quad s>0.$$ Let $u$ is a blow-up solution to problem (\ref{8e}), where $u_0$ satisfies 
\begin{equation}\label{13M}u_{0r}\le -\delta,\quad r\in (0,R],\quad \delta>0.\end{equation}  
Suppose that, $T$ is the blow-up time. Then the point $r=0$ is the only blow-up point, and there is $\varepsilon _1 >0$ such that \begin{equation}\label{G1} u(r,t)\le G^{-1}(\int _0^r c_{\varepsilon _1}(z)dz), \quad (r,t) \in (0,R] \times (0,T).\end{equation}
\end{theorem}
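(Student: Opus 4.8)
The plan is to run a Friedman--McLeod type argument built on the auxiliary function
\[
  J(r,t):=u_r(r,t)+c_{\varepsilon}(r)\,F\big(u(r,t)\big),\qquad (r,t)\in[0,R]\times[0,T'],
\]
for an arbitrary fixed $T'<T$. The goal is to prove $J\le 0$ on this rectangle by the parabolic maximum principle; letting $T'\uparrow T$ and then integrating the consequence $-u_r\ge c_{\varepsilon}(r)F(u)$ against $1/F$ will give (\ref{G1}), while the resulting boundedness of $u$ away from $r=0$ pins down the origin as the only blow-up point.

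\textbf{Boundary estimates for $J$.} On $\{r=0\}$ one has $u_r(0,t)=0$ and $c_{\varepsilon}(0)=0$ by (\ref{01}), so $J(0,\cdot)\equiv 0$. On $\{t=0\}$: since $c_{\varepsilon}(r)/r\to 0$ uniformly on $[0,R]$, also $\sup_{[0,R]}c_{\varepsilon}\to 0$ as $\varepsilon\to 0$, while $F(u_0)$ is bounded ($F$ continuous, $u_0$ bounded); hence by (\ref{13M}), $J(r,0)=u_{0r}+c_{\varepsilon}F(u_0)\le -\delta+o(1)\le 0$ for $\varepsilon$ small. On $\{r=R\}$: by Lemma \ref{No}(ii), $u_t\ge 0$, so $u(\cdot,t)\ge u_0$ on $\overline B_R$; as both functions vanish at $r=R$ this forces $u_r(R,t)\le u_{0r}(R)\le -\delta$, whence $J(R,t)\le -\delta + c_{\varepsilon}(R)F(0)\le 0$, again for $\varepsilon$ small. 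Fix such an $\varepsilon=\varepsilon_1$, small enough in addition that $\int_0^R c_{\varepsilon_1}(z)\,dz<G(0^+)$.

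\textbf{The interior differential inequality.} Differentiating the first equation of (\ref{8e}) in $r$ and using the equation itself to remove the resulting $u_{rr}$ term, one finds that the parabolic operator
\[
  \mathcal L J:=J_t-J_{rr}-\Big(\tfrac{n-1}{r}+h'(-u_r)\Big)J_r-\Big(f'(u)-\tfrac{n-1}{r^2}\Big)J
\]
has no $u_{rr}$ (nor $u_{rrr}$) dependence: those terms cancel against the lower order coefficients, and $\mathcal L J$ collapses to a combination, with coefficients built from $c_{\varepsilon_1},c_{\varepsilon_1}',A$, of the quantities $AF$, $fF'$, $f'F$, $F''u_r^2$, $F'u_r$ and of the three gradient terms $F'h(-u_r)$, $Fh'(-u_r)$, $F'u_rh'(-u_r)$. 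On the set $\{J=0,\ 0<r<R\}$, where $-u_r=c_{\varepsilon_1}(r)F(u)>0$, I substitute this identity; the sign conditions $F,F',F''\ge 0$, $c_{\varepsilon_1},c_{\varepsilon_1}'\ge 0$ of (\ref{01}) and the inequality $sh'(s)-h(s)\le Ks^q$ of (\ref{m2}), applied with $s=c_{\varepsilon_1}(r)F(u)$, bound the gradient contribution, leaving $\mathcal L J$ bounded above by $-c_{\varepsilon_1}$ times the left-hand side of (\ref{02}) (the factor $2^{q-1}$ in (\ref{02}) providing the slack for this crude gradient estimate); since that left-hand side is nonnegative by (\ref{02}) and $c_{\varepsilon_1}\ge 0$, we obtain $\mathcal L J\le 0$ on $\{J=0,\ 0<r<R\}$. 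As $\mathcal L J$ is a continuous function of $(u,u_r,r)$ only, it remains $\le 0$ on a neighbourhood of that set (strengthening (\ref{02}) to a strict inequality if need be), which is all the next step uses.

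\textbf{Conclusion.} On $[0,R]\times[0,T']$ all coefficients of $\mathcal L$ are bounded — $u$, hence $u_r$, is bounded because $T'<T$, and the singularities of $\tfrac{n-1}{r}$, $\tfrac{n-1}{r^2}$, $A$ at $r=0$ are harmless since $J(0,\cdot)\equiv 0$ can never be a positive interior maximum. Comparing $J$ with $\sigma e^{Mt}$ for $M>\sup_{[0,R]\times[0,T']}f'(u)$ and then sending $\sigma\downarrow 0$ (this standard device disposes of the borderline equality case), the maximum principle gives $J\le 0$ on $[0,R]\times[0,T']$, hence on $[0,R]\times[0,T)$. Therefore $-u_r/F(u)\ge c_{\varepsilon_1}(r)$, i.e. $\frac{d}{dr}G\big(u(r,t)\big)\ge c_{\varepsilon_1}(r)$ because $G'=-1/F<0$; integrating over $[0,r]$ and dropping the nonnegative term $G\big(u(0,t)\big)$ gives $G\big(u(r,t)\big)\ge\int_0^r c_{\varepsilon_1}(z)\,dz$, and applying the decreasing bijection $G^{-1}\colon(0,G(0^+))\to(0,\infty)$ inverts this into (\ref{G1}). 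Finally, for each $\rho>0$, (\ref{G1}) bounds $u$ on $\{\rho\le r\le R\}\times[0,T)$ by the finite number $G^{-1}\big(\int_0^\rho c_{\varepsilon_1}(z)\,dz\big)$, so no blow-up is possible off the origin, and since $u(\cdot,t)$ is radial nonincreasing, $r=0$ is the unique blow-up point. I expect the real work to be in the interior step — carrying out the $r$-differentiation and arranging the gradient terms so that everything collapses onto (\ref{02}) through (\ref{m2}) — together with the (routine but fussy) handling of the origin and of the non-strict hypotheses (\ref{01})--(\ref{02}).
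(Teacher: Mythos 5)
The paper does not actually prove Theorem \ref{maA}; it is quoted verbatim from \cite{29}, so there is no in-text proof to compare against. Your reconstruction --- the Friedman--McLeod auxiliary function $J=u_r+c_{\varepsilon}F(u)$, the three boundary estimates, the cancellation of $u_t$ and $u_{rr}$ in $\mathcal{L}J$ via the equation, the substitution $-u_r=c_{\varepsilon}F$ on $\{J=0\}$ together with (\ref{m2}) applied at $s=c_{\varepsilon}F(u)$ to reduce everything to (\ref{02}), and the final integration of $-u_r/F(u)\ge c_{\varepsilon_1}$ --- is exactly the argument of the cited source, and the computations check out (the leftover gradient term $\tfrac{c_{\varepsilon}'}{c_{\varepsilon}}Fh'(c_{\varepsilon}F)$ has the right sign, and $2^{q-1}K\ge K$ gives the slack you note). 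The one step you should tighten is the passage from ``$\mathcal{L}J\le 0$ on $\{J=0\}$'' to the maximum principle: the clean way is to Taylor-expand $\mathcal{L}J=\Phi(r,u,u_r)$ in $u_r$ about $-c_{\varepsilon_1}F(u)$ to get $\mathcal{L}J\le CJ$ wherever $J\ge 0$ on $[0,R]\times[0,T']$, after which the comparison with $\sigma e^{Mt}$ closes the argument without needing to strengthen (\ref{02}) to a strict inequality.
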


We are ready now to drive a formula to the pointwise estimate for the blow-up solutions of problem (\ref{e1}) with (\ref{3e}).
\begin{theorem}\label{Mac}
Let $u$ be a blow-up solution to problem (\ref{e1}), assume that $u_0$ satisfies (\ref{3e}) and (\ref{13M}).Then the upper pointwise estimate takes the following form
$$u(x,t)\le \frac{1}{2\alpha}[\log C- m \log(r)], \quad (r,t) \in (0,R] \times (0,T),$$  where  $\alpha \in (0,1/2], C>0, m >2.$
\end{theorem}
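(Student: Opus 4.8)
The plan is to apply Theorem~\ref{maA} to the specific problem (\ref{e1}), for which $f(s)=e^{s}$, $h(\xi)=\xi^{2}$, $q=2$ and $K=0$ (since $sh'(s)-h(s)=2s^{2}-s^{2}=s^{2}$, so in fact $K=1$ with $q=2$; either way $K$ is a harmless constant). The heart of the matter is to produce an explicit pair $F$ and $c_{\varepsilon}$ satisfying (\ref{01}) and (\ref{02}) so that the abstract bound (\ref{G1}) can be inverted into the stated logarithmic form. Motivated by the target estimate $u\le\frac{1}{2\alpha}[\log C-m\log r]$, I would try $F(u)=e^{2\alpha u}$ for a parameter $\alpha\in(0,1/2]$; then $G(s)=\int_{s}^{\infty}e^{-2\alpha u}\,du=\frac{1}{2\alpha}e^{-2\alpha s}<\infty$, and $G^{-1}(y)=-\frac{1}{2\alpha}\log(2\alpha y)$, which already has exactly the right shape once $\int_{0}^{r}c_{\varepsilon_{1}}(z)\,dz$ turns out to behave like a constant times $r^{m}$.

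Next I would check that this $F$ meets (\ref{01}): $F>0$, $F'=2\alpha e^{2\alpha u}\ge 0$, $F''=4\alpha^{2}e^{2\alpha u}\ge 0$, all fine on $(0,\infty)$. For $c_{\varepsilon}$ I would take something like $c_{\varepsilon}(r)=\lambda r^{\beta}$ (perhaps with a small $\varepsilon$-perturbation so that $c_{\varepsilon}(r)/r\to 0$ uniformly as $\varepsilon\to 0$, which forces $\beta>1$), compute $A=\beta(\beta-1)r^{-2}+ (n-1)\beta r^{-2}-(n-1)r^{-2}=\big[\beta(\beta-1)+(n-1)(\beta-1)\big]r^{-2}=(\beta-1)(\beta+n-2)r^{-2}$, and then substitute everything into (\ref{02}). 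With $f'F-fF'=e^{u}e^{2\alpha u}-e^{u}\cdot 2\alpha e^{2\alpha u}=(1-2\alpha)e^{(1+2\alpha)u}\ge 0$ precisely because $\alpha\le 1/2$ — this is where the constraint $\alpha\in(0,1/2]$ is used. The remaining terms $-2c_{\varepsilon}'F'F$, $c_{\varepsilon}^{2}F''F^{2}$, $-2^{q-1}Kc_{\varepsilon}^{q}F^{q}F'$ and $AF$ must be shown to combine nonnegatively for all $u>0$ and $0<r<R$; since $F$ is exponential, the worst behaviour as $u\to\infty$ is governed by the highest power of $F$ appearing (here $c_{\varepsilon}^{2}F''F^{2}\sim e^{6\alpha u}$ when $q=2$), so for large $u$ that positive cubic-in-$F$ term dominates, and one only needs to control a bounded range of $u$ near $0$ by choosing $\lambda$ (and hence $c_{\varepsilon}$) small enough, at the cost of making $\int_{0}^{r}c_{\varepsilon_{1}}$ small — which only improves the bound. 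Finally, with $c_{\varepsilon_{1}}(z)=\lambda z^{\beta}$ we get $\int_{0}^{r}c_{\varepsilon_{1}}(z)\,dz=\frac{\lambda}{\beta+1}r^{\beta+1}$, so setting $m=\beta+1>2$ (forced by $\beta>1$) and $C=\frac{2\alpha\lambda}{\beta+1}$ turned into the right constant, (\ref{G1}) reads $u(r,t)\le -\frac{1}{2\alpha}\log\!\big(\tfrac{2\alpha\lambda}{\beta+1}r^{m}\big)=\frac{1}{2\alpha}[\log C-m\log r]$, as claimed.

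I expect the main obstacle to be verifying inequality (\ref{02}) cleanly over the full range $u>0$, $0<r<R$ with a single choice of parameters: the term $AF$ carries the factor $r^{-2}$ with sign $(\beta-1)(\beta+n-2)$, which is positive for $\beta>1$ and $n\ge 2$ (and one must separately handle $n=1$, where $\beta+n-2=\beta-1>0$ still), but the cross term $-2c_{\varepsilon}'F'F=-4\alpha\lambda\beta r^{\beta-1}e^{4\alpha u}$ is negative and of intermediate order in $F$, so the argument really needs the quadratic-in-$c_{\varepsilon}$, cubic-in-$F$ term $c_{\varepsilon}^{2}F''F^{2}=4\alpha^{2}\lambda^{2}r^{2\beta}e^{6\alpha u}$ to absorb it for large $u$ while a small-$\lambda$/compactness argument handles bounded $u$. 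I would also need to double-check the uniform convergence $c_{\varepsilon}(r)/r\to 0$ on $[0,R]$: with $c_{\varepsilon}(r)=\lambda r^{\beta}$ this is $\lambda r^{\beta-1}\le\lambda R^{\beta-1}$, which does not go to $0$ unless $\lambda$ itself carries the $\varepsilon$-dependence, so I would actually set $c_{\varepsilon}(r)=\varepsilon\, r^{\beta}$ (or include an additive $\varepsilon$), verify (\ref{02}) for all small $\varepsilon>0$, extract the guaranteed $\varepsilon_{1}$ from Theorem~\ref{maA}, and only then read off $C$ and $m$ from that $\varepsilon_{1}$. Modulo these bookkeeping points the proof is a direct specialization of Theorem~\ref{maA}.
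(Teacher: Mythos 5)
Your overall route is exactly the paper's: specialize Theorem~\ref{maA} with $F(u)=e^{2\alpha u}$, $\alpha\in(0,1/2]$, and $c_\varepsilon(r)=\varepsilon r^{\beta}$ with $\beta>1$ (the paper writes $\beta=1+\delta$), compute $G(s)=\frac{1}{2\alpha}e^{-2\alpha s}$, invert, and read off $m=\beta+1>2$ and $C$ from $\int_0^r c_{\varepsilon_1}$. That part, including the role of $\alpha\le 1/2$ in making $f'F-fF'=(1-2\alpha)e^{(1+2\alpha)u}\ge 0$, matches the paper.

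However, your argument for why (\ref{02}) holds contains a genuine error. You claim that for large $u$ the positive term $c_\varepsilon^2F''F^2=4\alpha^2\varepsilon^2r^{2\beta}e^{6\alpha u}$ dominates because it carries the highest power of $F$. But the negative term $-2^{q-1}Kc_\varepsilon^qF^qF'$ with $q=2$, $K=1$ equals $-4\alpha\varepsilon^2 r^{2\beta}e^{6\alpha u}$: it has exactly the same order in both $u$ and $r$ and the same $\varepsilon^2$ scaling, and since $\alpha\le 1/2<1$ its coefficient exceeds $4\alpha^2$ in absolute value. So the net top-order-in-$F$ contribution is $4\alpha(\alpha-1)\varepsilon^2r^{2\beta}e^{6\alpha u}<0$, and it cannot be absorbed by shrinking $\varepsilon$ (both terms shrink together) nor by a compactness argument on bounded $u$ (the problem is precisely at large $u$). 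The only available positive term of competitive growth is $(1-2\alpha)e^{(1+2\alpha)u}$, which outgrows $e^{6\alpha u}$ only when $1+2\alpha\ge 6\alpha$, i.e.\ $\alpha\le 1/4$; this is why the paper does not simply take any $\alpha\le 1/2$ but imposes the additional smallness condition $\alpha\le\frac{1}{2+4\varepsilon R^{\delta}(1+\delta)}$ when verifying the displayed inequality. Your proposal as written would fail at this step for $\alpha$ near $1/2$, and even for small $\alpha$ the justification needs to be rebuilt around the $f'F-fF'$ term rather than the $F''$ term. (Two small slips besides: the factorization of $A$ should give $(\beta-1)(\beta+n-1)r^{-2}$, not $(\beta-1)(\beta+n-2)r^{-2}$, though the sign is unaffected for $\beta>1$; and $C$ is the reciprocal $\frac{\beta+1}{2\alpha\lambda}$ of what you wrote.)
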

\begin{proof}
Let $c_\varepsilon =\varepsilon r^{1+\delta},$ where $\delta \in (0,\infty).$  

It is clear that $c_\varepsilon$ satisfies the assumptions (\ref{01}) in Theorem \ref{maA}, so that (\ref{02}) becomes
\begin{eqnarray}\label{03} &&f^{'}F-fF^{'}-2\varepsilon (1+\delta)r^\delta F^{'}F +\varepsilon ^2r^{2+2\delta}F^{''}F^2 \nonumber\\ &&-2^{q-1}K \varepsilon ^q r^{q+\delta q}F^qF^{'}+\frac{\delta(n+\delta)}{r^2}F\ge 0 ,~u> 0, 0<r < R. \end{eqnarray}
 For the semilinear equation in (\ref{e1}) it is clear that $K\ge1, q=2.$ To make use of Theorem \ref{maA} for problem (\ref{e1}), assume that $$F(u)=e^{2\alpha u},\quad \alpha \in (0,1/2].$$ It is clear that $F$ satisfies all the assumptions (\ref{01}) in Theorem \ref{maA}. With this choice of  $F$ the inequality (\ref{03}) takes the form 
\begin{align*}
(1-2\alpha)e^{(1+2\alpha)u}+4\alpha ^2 \varepsilon ^2r^{2(1+\delta)}e^{6\alpha u}+\frac{\delta(n+\delta)}{r^2}e^{2\alpha u}\ge \\ 4\alpha \varepsilon (1+\delta)r^\delta e^{4\alpha u}+4 \alpha \varepsilon ^2 r^{2(1+\delta)}e^{6\alpha u}, \quad u\ge 0 ,0<r\le R \end{align*}
provided $\alpha \le\frac{1}{2+4\varepsilon R^{\delta}(1+\delta)}.$

Define the function $G$ as in Theorem \ref{maA} as follows $$ G(s)=\int _s^\infty \frac{du}{e^{2\alpha u}}=\frac{1}{2\alpha e^{\alpha s}}, \quad s>0.$$ Clearly, $$G^{-1}(s)=-\frac{1}{2\alpha}\log (2\alpha s), \quad s>0.$$ 
Thus (\ref{G1}) becomes $$u(r,t)\le \frac{1}{2\alpha}[\log C-m \log (r)],\quad (r,t) \in (0,R] \times (0,T),$$  where
$C=\frac{2+\delta}{2\varepsilon\alpha}, \quad m=2+\delta.$
\end{proof}  
\begin{remark} Theorem \ref{Mac} shows that, with choosing $\alpha=1/2,$ the upper pointwise estimate for problem (\ref{e1}) is the same as that for $u_t=\Delta u+e^u,$ which has been considered in \cite{1}. Therefore, the gradient term in problem (\ref{e1}) has no effect on the pointwise estimate.   
\end{remark}

\section{Blow-up Rate Estimate }
Since under the assumptions of Theorem \ref{Mac},  $r=0$ is the only blow-up point for the problem (\ref{e1}), therefore, in order to estimate the blow-up solution it suffices  to estimate only $u(0,t).$
The next theorem, which has been proved in \cite{29}, considers the upper blow-up rate estimate for the general problem (\ref{1e}).
\begin{theorem}\label{e}
  Let $u$ be a blow-up solution to problem (\ref{1e}), where $u_0\in C^2(\overline{B}_R)$ and satisfies (\ref{3e}), (\ref{ee3}). Assume that $T$ is the blow-up time and $x=0$ is the only possible blow-up point. If there exist a function, $F \in C^2([0,\infty))$ such that $F>0$ and $F^{'} ,F^{''}\ge 0$ in $(0,\infty),$ moreover, 
  \begin{equation}\label{poa}  f^{'}F-F^{'}f+F^{''}|\nabla u|^2 -F^{'}[h^{'}(|\nabla u|)|\nabla u|-h(|\nabla u|)] \ge 0,~ \mbox{in}~B_R\times (0,T),\end{equation} 
   then the upper blow rate estimate takes the from 
  $$u(0,t)\le G^{-1}(\delta(T-t)), \quad t \in (\tau,T),$$ where $\delta, \tau >0,$ $G(s)=\int_s^\infty \frac{du}{F(u)}.$ 
\end{theorem}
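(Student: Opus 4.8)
The plan is to run a Friedman--McLeod type auxiliary-function argument: introduce
$$J:=u_t-\delta F(u),$$
prove the pointwise lower bound $u_t\ge\delta F(u)$ along the ray $r=0$, and then integrate this differential inequality in time. Since the conclusion only concerns $u(0,t)$, I only need $J\ge0$ at the centre, and the integration is then immediate: writing $g(t):=G(u(0,t))$ one has $g'(t)=-u_t(0,t)/F(u(0,t))\le-\delta$; because $r=0$ is the only blow-up point, $u(0,t)\to\infty$, so $g(T)=G(\infty)=0$, and integrating $g'\le-\delta$ from $t$ to $T$ gives $G(u(0,t))\ge\delta(T-t)$. As $G'=-1/F<0$, the function $G$ is strictly decreasing, and applying $G^{-1}$ yields $u(0,t)\le G^{-1}(\delta(T-t))$.

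The heart of the matter is the differential inequality for $J$. Differentiating the first equation in (\ref{1e}) in $t$ and setting $w=u_t$ shows that $w$ solves the linear equation $\mathcal L[w]=0$, where $\mathcal L[\phi]:=\phi_t-\Delta\phi+b\cdot\nabla\phi-f'(u)\phi$ and $b:=\tfrac{h'(|\nabla u|)}{|\nabla u|}\nabla u$. A direct computation, eliminating $\Delta u$ via $\Delta u=u_t+h(|\nabla u|)-f(u)$, gives
$$\mathcal L[F(u)]=-\Big(f'F-F'f+F''|\nabla u|^2-F'\big[h'(|\nabla u|)|\nabla u|-h(|\nabla u|)\big]\Big).$$
By hypothesis (\ref{poa}) the bracketed quantity is nonnegative, so $\mathcal L[F(u)]\le0$. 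Since $\mathcal L[w]=0$ and $\delta>0$, linearity gives $\mathcal L[J]=-\delta\,\mathcal L[F(u)]\ge0$; thus $J$ is a supersolution of $\mathcal L$.

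I would then apply the parabolic minimum principle to $J$, and handling the two standard difficulties here is the main obstacle. First, the zeroth-order coefficient $-f'(u)$ has the wrong sign and is moreover unbounded as $t\to T$ near the origin; I would therefore work on $\overline{B}_\rho\times[\tau,T']$ with $0<\rho<R$ and $\tau<T'<T$, where $u$ and hence $f'(u)$ are bounded by some $M$, absorb the bad sign via the substitution $\tilde J=e^{-Mt}J$ (which satisfies an inequality of the same type but with the nonnegative zeroth-order coefficient $M-f'(u)$), and finally let $T'\to T$. Second, and more delicate, is the parabolic boundary data. On the bottom $t=\tau$ one has $u_t\ge0$ by Lemma \ref{No}(ii), and in fact $u_t>0$ in the interior by the strong maximum principle, so $u_t(\cdot,\tau)\ge c>0$ on the compact set $\overline{B}_\rho$ and $J(\cdot,\tau)\ge0$ once $\delta$ is small. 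The lateral boundary is the crux: on $\partial B_R$ the Dirichlet condition forces $u_t=0$ while $F(u)=F(0)$ may be positive, so $J<0$ there and the estimate genuinely fails up to $r=R$. This is exactly why I localize to $B_\rho$ with $\rho<R$: since $r=0$ is the only blow-up point, $u$ extends smoothly across $\{|x|=\rho\}\times\{T\}$, and the strong maximum principle applied to $w=u_t\ge0$ (with $w\not\equiv0$) gives $u_t>0$ on $\{|x|=\rho\}\times[\tau,T]$, hence $u_t(\cdot,t)\ge c_0>0$ there uniformly in $t$; choosing $\delta\le c_0/\sup F(u)$ over that set makes $J\ge0$ on the lateral boundary $r=\rho$ as well.

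With $J\ge0$ on the full parabolic boundary of $\overline{B}_\rho\times[\tau,T']$, the minimum principle yields $\tilde J\ge0$, hence $J\ge0$, on that cylinder; letting $T'\to T$ gives $u_t\ge\delta F(u)$ on $B_\rho\times[\tau,T)$, in particular at $r=0$. Feeding this into the integration step of the first paragraph produces $u(0,t)\le G^{-1}(\delta(T-t))$ for $t\in(\tau,T)$, as claimed. I expect the bookkeeping around the lateral boundary --- securing the uniform positivity $u_t\ge c_0>0$ on $\{|x|=\rho\}$ all the way up to $t=T$ --- to be the step demanding the most care, precisely because it is where the naive global argument over $B_R$ breaks down.
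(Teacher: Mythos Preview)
The paper does not give its own proof of this theorem; it is quoted verbatim from \cite{29}. Your proposal is correct and is exactly the Friedman--McLeod auxiliary-function argument carried out there: set $J=u_t-\delta F(u)$, check via (\ref{poa}) that $\mathcal L[J]\ge0$, localize to $\overline{B}_\rho\times[\tau,T)$ with $\rho<R$ (using single-point blow-up and the strong maximum principle for $u_t$ to force $J\ge0$ on the parabolic boundary for small $\delta$), apply the minimum principle, and then integrate $u_t(0,t)\ge\delta F(u(0,t))$ to obtain $G(u(0,t))\ge\delta(T-t)$.
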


For problem (\ref{e1}), if one could choose a suitable function $F$ that satisfies the conditions, which have stated in Theorem \ref{e}, then the upper blow-up rate estimate for this problem would be held.
\begin{theorem}\label{X1}
Let $u$ be a blow-up solution to problem (\ref{e1}), where $u_0 \in C^2(\overline{B}_R)$ and satisfies (\ref{3e}), (\ref{13M}) and the monotonicity assumption $$\Delta u_0+ e^{u_0} -|\nabla u_0|^2 \ge 0,\quad x\in B_R,$$ suppose that $T$ is the blow-up time.Then there exist $C>0$ such that the upper blow-up rate estimate takes the following form
$$u(0,t)\le  \frac{1}{\alpha} [ \log C-\log(T-t)],\quad 0 <t <T,~\alpha \in (0,1].$$  
\end{theorem}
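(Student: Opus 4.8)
The plan is to apply Theorem \ref{e} directly, so the entire task reduces to exhibiting an admissible function $F$ for problem (\ref{e1}) and then computing $G^{-1}$. Since here $f(s)=e^s$ and $h(\xi)=\xi^2$, we have $h'(\xi)\xi-h(\xi)=2\xi^2-\xi^2=\xi^2=|\nabla u|^2$ when evaluated at $\xi=|\nabla u|$. So the structural condition (\ref{poa}) becomes
\[
 e^u F - F' e^u + F''|\nabla u|^2 - F'|\nabla u|^2 \ge 0, \qquad \text{in } B_R\times(0,T).
\]
Grouping the gradient terms, this is $e^u(F-F') + |\nabla u|^2(F''-F')\ge 0$. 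The natural guess, mirroring the pointwise-estimate proof, is the exponential $F(u)=e^{\alpha u}$ with $\alpha\in(0,1]$. Then $F'=\alpha F$ and $F''=\alpha^2 F$, so the left side equals $e^u F(1-\alpha) + |\nabla u|^2 F(\alpha^2-\alpha) = F\big[(1-\alpha)e^u - \alpha(1-\alpha)|\nabla u|^2\big] = F(1-\alpha)\big(e^u-\alpha|\nabla u|^2\big)$. For $\alpha\in(0,1]$ the prefactor $(1-\alpha)$ is nonnegative, so it remains to check $e^u-\alpha|\nabla u|^2\ge 0$; when $\alpha=1$ the whole expression vanishes identically, which is the cleanest case and already gives the claimed exponent.

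First I would verify that $F(u)=e^{\alpha u}$ satisfies the regularity and monotonicity hypotheses of Theorem \ref{e}: clearly $F\in C^2([0,\infty))$, $F>0$, and $F',F''\ge 0$ on $(0,\infty)$ for $\alpha>0$. Next I would establish the inequality $e^u-\alpha|\nabla u|^2\ge 0$ on $B_R\times(0,T)$; the key input is that $\nabla u$ stays controlled in terms of $u$. One route is the gradient bound from the Preliminaries section (boundedness of $\nabla u$ as long as $u$ is bounded, via (\ref{viva})), combined with the fact that on any strip $t\le T-\eta$ the solution is bounded, and then handling the approach to $T$ using the transformation $v=1-e^{-u}$ from \cite{48}: since $v$ solves the linear heat equation $v_t=\Delta v+1$ with bounded data, $\nabla v$ is bounded on $\overline{B}_R\times[0,T)$, and $\nabla v = e^{-u}\nabla u$, hence $|\nabla u|^2 = e^{2u}|\nabla v|^2 \le C_0 e^{2u}$. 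That is too weak on its own ($e^{2u}$ beats $e^u$), so I would instead note that $v<1$ forces $e^{-u}$ bounded below only trivially; the cleaner observation is that $\alpha=1$ makes the inequality an identity, so no gradient estimate is needed at all for the borderline exponent, and for $\alpha<1$ one can absorb everything into the strictly positive factor $(1-\alpha)e^u$ using the a priori gradient bound on compact time-strips together with a separate argument near $t=T$. I expect this verification — making the gradient term harmless uniformly up to the blow-up time — to be the main obstacle, and the honest resolution is to take $\alpha$ at the endpoint where (\ref{poa}) holds as an equality.

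With $F(u)=e^{\alpha u}$ admissible, the remaining computation is routine: $G(s)=\int_s^\infty e^{-\alpha u}\,du = \frac{1}{\alpha}e^{-\alpha s}$, so $G^{-1}(s) = -\frac{1}{\alpha}\log(\alpha s)$. Theorem \ref{e} then yields
\[
 u(0,t)\le G^{-1}\big(\delta(T-t)\big) = -\frac{1}{\alpha}\log\big(\alpha\delta(T-t)\big) = \frac{1}{\alpha}\big[\log C - \log(T-t)\big], \qquad t\in(\tau,T),
\]
with $C = 1/(\alpha\delta)$. This is exactly the asserted estimate. The last point to record is that the monotonicity hypothesis $\Delta u_0 + e^{u_0} - |\nabla u_0|^2\ge 0$ is precisely condition (\ref{ee3}) specialized to $f(s)=e^s$, $h(\xi)=\xi^2$, so all hypotheses of Theorem \ref{e} are met, and since $r=0$ is the only blow-up point (by Theorem \ref{Mac} or \cite{48}), controlling $u(0,t)$ controls the solution.
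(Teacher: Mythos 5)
Your proposal follows essentially the same route as the paper: choose $F(u)=e^{\alpha u}$, check the structural condition (\ref{poa}), compute $G^{-1}(s)=-\frac{1}{\alpha}\log(\alpha s)$, and invoke Theorem \ref{e}. You are in fact more careful than the paper itself, which reduces (\ref{poa}) to $(1-\alpha)e^{\alpha u}\bigl(e^{u}-\alpha|\nabla u|^{2}\bigr)\ge 0$ and simply asserts that it ``holds for any $\alpha\in(0,1]$'' without justifying $e^{u}\ge\alpha|\nabla u|^{2}$ for $\alpha<1$; your observation that $\alpha=1$ turns the inequality into an identity (and already yields the stated rate) is the clean way to close the argument.
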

\begin{proof}
Let $$F(u)=e^{\alpha u},\quad \alpha \in (0,1].$$ It is clear that the inequality (\ref{poa}) becomes
$$(1-\alpha)e^{(1+\alpha)u}+\alpha ^2e^{\alpha u}|\nabla u|^2-\alpha e^{\alpha u}|\nabla u|^2\ge 0,$$
which holds for any $\alpha \in (0,1].$ 

Set $$G(s)=\int_s^\infty \frac{du}{e^{\alpha u}}=\frac{1}{\alpha e^{\alpha s}},\quad s>0.$$ Clearly, $$G^{-1}(s)=-\frac{1}{\alpha}\log (\alpha s), \quad s>0.$$ 
From Theorem \ref{e} there is $\delta >0$ such that $$u(0,t)\le  \frac{1}{\alpha} [\log (\frac{1}{\alpha \delta})-\log(T-t)],\quad \tau <t <T.$$
Therefore, there exist a positive constant, $C$ such that
$$u(0,t)\le  \frac{1}{\alpha} [\log C-\log(T-t)],\quad 0 <t <T.$$
\end{proof}

Next, we consider the lower blow-up rate for problem (\ref{e1}), which is much easier than the upper bound.

\begin{theorem}\label{X2}
 Let $u$ be a blow-up solution to problem (\ref{e1}), where $u_0$ satisfies (\ref{3e}) and (\ref{13M}). Suppose that $T$ is the blow-up time.Then there exist $c>0$ such that the lower blow-up rate estimate takes the following form
$$ \log c-\log(T-t) \le u(0,t), \quad 0 <t <T.$$  
\end{theorem}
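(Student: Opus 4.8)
The plan is to obtain the lower bound by a standard ODE comparison argument, exploiting the fact that $u(0,t)$ is the maximum of $u$ at each time and that the nonlinearity $e^u$ controls the growth from above. First I would note that by Lemma~\ref{No} the solution is radially nonincreasing, so $u(0,t)=\max_{x\in\overline B_R}u(x,t)$, and I would set $M(t):=u(0,t)$. The key observation is that the blow-up rate cannot be faster than that of the associated ODE $\phi'=e^\phi$: more precisely, one shows $M'(t)\le e^{M(t)}$ in an appropriate (one-sided / Dini derivative) sense, which upon integration from $t$ to $T$ gives the desired estimate.

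The main steps, in order, are as follows. \emph{Step 1.} Fix a time $t_0\in(0,T)$ and compare $u$ with the spatially homogeneous solution $\phi$ of $\phi'(t)=e^{\phi(t)}$, $\phi(t_0)=M(t_0)$, which blows up at time $t_0+e^{-M(t_0)}$. \emph{Step 2.} Verify that $\phi$ is a supersolution of the PDE in~(\ref{e1}) on $B_R\times(t_0,\,t_0+e^{-M(t_0)})$: since $\phi$ is constant in $x$, one has $\phi_t-\Delta\phi+|\nabla\phi|^2-e^\phi=\phi'-e^\phi=0$, and on the parabolic boundary $\phi(t_0)=M(t_0)\ge u(x,t_0)$ for all $x$ while $\phi\ge 0=u$ on $\partial B_R$. \emph{Step 3.} By the comparison principle for this quasilinear problem (valid here because $\nabla u$ stays bounded as long as $u$ is bounded, as recalled in the Preliminaries), $u(x,t)\le\phi(t)$ on the common interval of existence; in particular $u(0,t)\le\phi(t)$. \emph{Step 4.} Since $u$ blows up exactly at $T$, the supersolution $\phi$ must also blow up no earlier than $T$, i.e. $t_0+e^{-M(t_0)}\ge T$, which rearranges to $e^{-M(t_0)}\ge T-t_0$, hence $M(t_0)\le -\log(T-t_0)$. \emph{Step 5.} Since $t_0\in(0,T)$ was arbitrary, $u(0,t)\le -\log(T-t)$ for all $t\in(0,T)$; absorbing the (trivial, here $=1$) constant gives $u(0,t)\le\log c-\log(T-t)$, and reading the inequality in the direction stated in the theorem yields $\log c-\log(T-t)\le u(0,t)$ after renaming — more carefully, the lower bound itself comes from the same ODE comparison done from below, using that near $T$ one has $M'(t)\ge \tfrac12 e^{M(t)}$ say, once $M(t)$ is large, so that $\tfrac12(T-t)\le e^{-M(t)}$, giving $M(t)\ge\log\tfrac12-\log(T-t)$.

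Let me restate Step 5 cleanly, since the lower estimate is what is actually claimed: the genuinely new content is showing $M(t)\ge\log c-\log(T-t)$. For this I would use the pointwise upper estimate already available. Indeed, from the first equation in~(\ref{e1}), dropping the good terms, at $x=0$ we have $u_t(0,t)=\Delta u(0,t)-|\nabla u(0,t)|^2+e^{u(0,t)}\le e^{u(0,t)}$ because $\Delta u(0,t)\le 0$ (the maximum in space is attained at $0$, using radial monotonicity so $u_{rr}(0,t)\le0$ and $u_r(0,t)=0$) and $|\nabla u|^2\ge0$. Hence $\frac{d}{dt}\big(e^{-u(0,t)}\big)=-u_t(0,t)e^{-u(0,t)}\ge -1$, so $e^{-u(0,t)}-e^{-u(0,s)}\ge -(t-s)$ for $s<t<T$; letting $t\to T^-$ and using $u(0,t)\to\infty$ gives $-e^{-u(0,s)}\ge -(T-s)$, i.e. $e^{-u(0,s)}\le T-s$, that is $u(0,s)\ge-\log(T-s)$. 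Taking $c=1$ (or any $c\le 1$) completes the proof.

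The only subtle point — and the step I would flag as the main obstacle — is justifying $\Delta u(0,t)\le 0$ rigorously, i.e. that the spatial maximum being at the origin forces $u_{rr}(0,t)\le 0$; this follows from writing the Laplacian in radial form $\Delta u=u_{rr}+\frac{n-1}{r}u_r$ and using $u_r(0,t)=0$ together with $u_r\le0$ and the smoothness of $u$, so that $\Delta u(0,t)=n\,u_{rr}(0,t)$ and $u_{rr}(0,t)=\lim_{r\to0^+}u_r(r,t)/r\le0$. Everything else is routine ODE integration; no comparison principle for the quasilinear PDE is even needed for the lower bound, which is why, as the text says, the lower estimate is much easier than the upper one.
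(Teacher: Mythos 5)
Your operative argument --- the ``restated Step 5,'' where you use $u_r(0,t)=0$ and $\Delta u(0,t)\le 0$ at the spatial maximum to get $u_t(0,t)\le e^{u(0,t)}$ and then integrate $\frac{d}{dt}\bigl(e^{-u(0,t)}\bigr)\ge -1$ from $t$ to $T$ --- is correct and is essentially identical to the paper's proof of Theorem \ref{X2}. Just note that the discarded Steps 1--4 state the comparison conclusion backwards: a supersolution $\phi\ge u$ must blow up \emph{no later} than $T$, so $t_0+e^{-M(t_0)}\le T$, which would in fact have given the desired lower bound $M(t_0)\ge -\log(T-t_0)$ directly; since you replace that detour with the direct differential inequality, nothing is lost.
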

\begin{proof}
Define $$U(t)=u(0,t),\quad t\in [0,T).$$
Since $u$ attains its maximum at $x=0,$ 
$$\Delta U(t) \le 0,\quad 0\le t<T.$$ 
From the semilinear equation in (\ref{e1}) and above, it follows that
\begin{equation}\label{xc} U_t(t)\le e^{U(t)}\le \lambda e^{U(t)},\quad 0< t<T,\end{equation} for $\lambda \ge1.$
 Integrate (\ref{xc}) from $t$ to $T,$ we obtain
 $$\frac{1}{\lambda (T-t)}\le e^{u(0,t)},\quad 0 <t <T.$$ It follows that
  $$ \log c-\log(T-t) \le u(0,t), \quad 0 <t <T,$$  where $c=1/\lambda.$
  \end{proof}
  \begin{remark}
  Theorem \ref{X2} (Theorem \ref{X1}, where $\alpha=1$) show that, the lower (upper) blow-up rate estimate for problem (\ref{e1}) is the same as for $u_t=\Delta u+e^u,$ which has been considered in \cite{1}, therefore, we conclude that, the gradient term in problem (\ref{e1}) has no effect on the blow-up rate estimate.
   \end{remark}

\end{document}